\crefname{enumi}{}{}
\newtheorem{theorem}{Theorem}[section]
\newtheorem{lemma}[theorem]{Lemma}
\newtheorem{proposition}[theorem]{Proposition}
\newtheorem{corollary}[theorem]{Corollary}
\newtheorem{question}[theorem]{Question}
\theoremstyle{definition}
\newtheorem{definition}[theorem]{Definition}
\newtheorem{example}[theorem]{Example}
\newtheorem{remark}[theorem]{Remark}
\DeclareMathOperator{\Conv}{Conv}
\def\R{\mathbb{R}}
\def\C{\mathbb{C}}
\def\Z{\mathbb{Z}}
\renewcommand{\leq}{\leqslant}
\renewcommand{\geq}{\geqslant}
\DeclareMathOperator{\Vol}{Vol}
\DeclareMathOperator{\GL}{GL}
\DeclareMathOperator{\AGL}{AGL}
\newcommand{\one}{\mathbf{1}}
\newcommand{\zero}{\mathbf{0}}
\begin{document}

\title{The structure of monotone blow-ups in symplectic toric geometry and a question of McDuff
}

\author{\'Alvaro Pelayo \,\,\,\,\,\,\,\, Francisco Santos}

\address{\'Alvaro Pelayo, Departamento de \'Algebra, Geometr\'ia y Topolog\'ia,
Facultad de Ciencias Matem\'aticas,
Universidad Complutense de Madrid, 28040 Madrid, Spain}
\email{alvpel01@ucm.es}

\address{Francisco Santos,
Departamento de Matem\'{a}ticas, Estad\'{i}stica y Computaci\'{o}n, Universidad de Cantabria, Av.~de Los Castros 48, 39005 Santander, Spain}
\email{francisco.santos@unican.es}

 \subjclass[2000]{Primary  53D05,  53D20, 52A20; Secondary 52C07, 52B11, 52B20}

 \keywords{Symplectic manifold, toric manifold, momentum map, Delzant polytope, smooth reflexive polytope, monotone polytope,  blow-up, mirror symmetry.}

\begin{abstract}
Monotone polytopes, also known as smooth reflexive polytopes, are the polytopes associated to monotone symplectic toric manifolds and Gorenstein Fano toric varieties. We first show that the only monotone polytopes admitting blow-ups at vertices are the simplex and the result of a codimension-two blow-up in it (this is the polyhedral version of a result of Bonavero from 2002). Then we show that the $n$-simplex admits disjoint blow-ups at faces if and only if the faces are disjoint and have dimensions adding up to $n-1$ or $n-2$. These results answer a question posed by Dusa McDuff in 2011.
\end{abstract}

\maketitle

\setcounter{tocdepth}{3}

\section{Introduction}

Symplectic toric manifolds, also known as toric integrable systems,  have been intensively studied in the past four decades, from a combinatorial, symplectic and algebraic view point. Thanks to this, a fairly complete
dictionary exists which allows one to pose, and even solve, problems about them using a combination of techniques from these areas. For example, all the symplectic and algebraic information of the 
complex projective space of complex dimension $n$ can be detected in the standard simplex $n$-simplex $\Delta_n$, defined as the convex hull of 
the origin and the canonical basis vectors in $\mathbb{R}^n$. 

\subsection{A question about monotone symplectic blow-ups}
  
 A \emph{monotone polytope}, or a \emph{smooth reflexive polytope}, is a lattice polytope with the origin as its unique interior point and such that 
its polar is also a lattice polytope (equivalently, all facets are at lattice distance one from the origin). In this paper we 
answer a question of Dusa McDuff from 2011 about the structure of blow-ups of monotone polytopes, or equivalently, the structure of blow-ups
of monotone symplectic toric manifolds. 
Indeed, in her seminal paper on the topology of symplectic toric manifolds~\cite{McDuff}, McDuff asks:

\begin{question}[\protect{\cite[Question 2.7]{McDuff}}]
\label{q:2.7}
Is there a monotone polytope of dimension $d > 2$ for which one can make at least two monotone and disjoint blow ups of points, or, more generally, of any two faces of codimension $> 2$?
\end{question}

Via the Delzant correspondence monotone polytopes are the momentum polytopes of monotone symplectic toric manifolds, which makes them very significant in symplectic toric geometry. As McDuff says in here 
paper~\cite[page 151, paragraph 2]{McDuff}, \emph{``rather little seems to be known in general about their structure''}, and poses several questions about them, including Question~\ref{q:2.7}. 

\subsection{Polyhedral results}

We answer both parts of  Question~\ref{q:2.7} in a very strong way:
On the one hand, \emph{there are essentially only two monotone polytopes that admit monotone blow-ups at a point}. We attribute this result to Bonavero since, although our proof is different and completely elementary (as opposed to his more algebraic proof), our result translates to \cite[Theorem 1]{Bonavero}, via the dictionary relating lattice polytopes and toric varieties:

\begin{theorem}[\protect{\cite[Theorem 1]{Bonavero}}]
\label{thm:vertex-intro}
 The only monotone polytopes that admit \emph{one} monotone blow-up  at a point are $(n+1) \Delta_n$,  which is the only monotone $n$-simplex modulo $\AGL(n,\Z)$-equivalence,
 and the blow-up of a codimension-two face of $(n+1)\Delta_n$.
 \end{theorem}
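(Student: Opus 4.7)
The plan is to normalize coordinates so that the vertex $v$ being blown up is at $-\mathbf{1}$ and the $n$ facets of $P$ meeting $v$ are the coordinate hyperplanes $\{x_i=-1\}$, so their primitive inner normals are the $e_i$. In these coordinates the monotone blow-up at $v$ cuts along $\{\sum_i x_i=-1\}$, and its admissibility amounts to requiring that each edge at $v$ have lattice length $L_i\ge n$. Writing $w_i=-\mathbf{1}+L_i e_i$ for the neighbor of $v$ along edge $i$, the idea is to extract all the needed information from the single extra facet $G_i$ of $P$ at each $w_i$.

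Smoothness at $w_i$ and the condition $v\notin G_i$ pin down the primitive inner normal $u^{(i)}$ of $G_i$ to satisfy $u^{(i)}_i=-1$ and $\sum_l u^{(i)}_l=1-L_i$, while monotonicity at the other neighbors forces $u^{(i)}_j\ge -L_i/L_j$. A direct integer bound shows $L_i\le n+1$ whenever $L_i\le 2n-1$; the extreme range $L_i\ge 2n$ would force $u^{(i)}_j=-2$ for every $j\ne i$, and a direct check then places each $w_j$ on both $G_i$ and $G_j$, making $w_j$ incident to $n+1$ facets and violating the Delzant condition. So $L_i\in\{n,n+1\}$ for every $i$.

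Let $m$ be the number of indices with $L_i=n+1$. Rerunning the same computation at a vertex $w_j$ with $L_j=n$ and using the sharper bound $u^{(j)}_k\ge 0$ whenever $L_k=n+1$ (since $\lceil -n/(n+1)\rceil=0$) gives $m\le 1$. To eliminate the remaining case $m=0$, i.e.\ all $L_i=n$, one observes that the constraints force $u^{(j)}=-\mathbf{1}+e_{k(j)}$ for a map $k\colon\{1,\dots,n\}\to\{1,\dots,n\}$ with $k(j)\ne j$, and that each resulting facet $\{\sum_{l\ne k(j)} x_l=1\}$ automatically contains every $w_l$ with $l\ne k(j)$. The Delzant requirement that exactly one new facet pass through each $w_l$ forces $|\mathrm{Im}(k)\setminus\{l\}|\le 1$ for every $l$, together with $\mathrm{Im}(k)\supseteq\{1,\dots,n\}$; these are jointly consistent only for $n\le 2$. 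Hence in dimension $\ge 3$ one has $m\in\{1,n\}$.

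In the case $m=n$ all $u^{(i)}$ equal $-\mathbf{1}$, so every $G_i$ is the single facet $\{\sum_l x_l=1\}$ and $P$ has exactly $n+1$ facets, forcing $P=(n+1)\Delta_n$. In the case $m=1$, say $L_{k_0}=n+1$, one gets $u^{(k_0)}=-\mathbf{1}$ and $u^{(j)}=-\mathbf{1}+e_{k_0}$ for $j\ne k_0$, producing precisely two new facets $\{\sum_l x_l=1\}$ and $\{\sum_{l\ne k_0} x_l=1\}$, which presents $P$ as the codimension-two blow-up of $(n+1)\Delta_n$ along the $(n-2)$-face opposite to the edge through $v$ in direction $e_{k_0}$. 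The converse is a direct inspection of edge lengths. The main obstacle is ensuring that the Delzant condition at the neighbors $w_j$---not merely at $v$---is used systematically: it is the repeated interplay between the integer lower bounds on $u^{(i)}_j$ and the smoothness counts at every $w_j$ that eliminates all the otherwise-admissible configurations.
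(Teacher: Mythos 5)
Your overall strategy coincides with the paper's: normalize so that $v=-\mathbf{1}$ with the facets through $v$ being coordinate hyperplanes, study the single extra facet $G_i$ through each neighbor $w_i=-\mathbf{1}+L_i e_i$, deduce $L_i\in\{n,n+1\}$ and that each $G_i$ has (exterior) normal $\mathbf{1}$ or $\mathbf{1}-e_j$, and then do a case analysis on how many lengths equal $n+1$. Your local derivations are correct and in places cleaner than the paper's: you obtain the key bound $u^{(i)}_j\ge -L_i/L_j$ directly from the facet inequality of $G_i$ evaluated at the other neighbors (the paper argues via the geometry of the $2$-face through $v,w_i,w_j$), and you kill the extreme case $L_i=2n$ by a simplicity count at $w_j$ rather than by exhibiting a non-smooth triangle. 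The $m\le 1$ computation and the determination of all the normals in the cases $m=n$ and $m=1$ check out, and your elimination of $m=0$ is the same incidence-counting argument as the paper's (the intermediate assertion $\mathrm{Im}(k)\supseteq\{1,\dots,n\}$ is not right as stated, but the conclusion follows already from $|\mathrm{Im}(k)\setminus\{l\}|\le 1$ for all $l$ together with $k(j)\ne j$, so this is only a slip of phrasing).

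There is, however, one genuine gap at the very end. Your analysis determines the facets of $P$ \emph{incident to $v$ or to one of its neighbors $w_i$}, and hence shows $P\subseteq Q$, where $Q$ is the codimension-two blow-up of the simplex cut out by the $n+2$ inequalities you list. But nothing so far excludes further facets of $P$ disjoint from $\{v,w_1,\dots,w_n\}$, i.e.\ the possibility $P\subsetneq Q$; the phrase ``producing precisely two new facets'' is exactly the unproved claim. (In the case $m=n$ this is harmless because $P$ is sandwiched between $\mathrm{conv}\{v,w_1,\dots,w_n\}=(n+1)\Delta_n$ and $(n+1)\Delta_n$ itself; in the case $m=1$ the polytope $Q$ has $2n$ vertices and you have only placed $n+1$ of them inside $P$.) The paper closes this by showing that each remaining vertex $w_i+e_{k_0}$ of $Q$ lies in $P$: the $n-1$ facets through $w_i$ other than $\{x_{k_0}=-1\}$ intersect in an edge of $P$ emanating from $w_i$ in the direction $+e_{k_0}$, and since $P$ is a lattice polytope this edge contains the next lattice point $w_i+e_{k_0}$; hence $P$ contains all vertices of $Q$ and $P=Q$. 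You need to add this (or an equivalent) final step before you may conclude that $P$ \emph{is} the codimension-two blow-up rather than a proper monotone subpolytope of it.
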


\begin{corollary}
No monotone polytope of dimension $>2$ admits two monotone disjoint blow-ups at points.
\end{corollary}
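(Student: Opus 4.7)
My plan is to derive a contradiction by combining Theorem~\ref{thm:vertex-intro} with a count of facets and a comparison of the combinatorial type of the newly created facet.

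First, I would suppose for contradiction that $P$ is a monotone polytope of dimension $n\geq 3$ admitting two disjoint monotone blow-ups at distinct vertices $p_1$ and $p_2$, and let $P'$ denote the monotone blow-up of $P$ at $p_1$. Since the two blow-ups are disjoint, $p_2$ remains a vertex of $P'$ and the second monotone blow-up is still available there, so $P'$ is also a monotone polytope admitting a monotone blow-up at a vertex. Applying Theorem~\ref{thm:vertex-intro} twice, both $P$ and $P'$ must belong to the list consisting of (a) the monotone simplex $(n+1)\Delta_n$, and (b) the monotone blow-up of $(n+1)\Delta_n$ at a codimension-two face.

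Next I would narrow this down by counting facets. The simplex $(n+1)\Delta_n$ has $n+1$ facets, whereas its codimension-two blow-up has $n+2$. Any monotone blow-up at a vertex introduces exactly one new facet (the one cutting off the vertex) and removes none, so $P'$ has exactly one more facet than $P$. The only pair from (a), (b) compatible with this jump is $P=(n+1)\Delta_n$ and $P'$ equal to the monotone blow-up of $(n+1)\Delta_n$ at a codimension-two face.

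To close the argument I would inspect the shape of the newly created facet under the two presentations of $P'$. As a vertex blow-up of $(n+1)\Delta_n$, the new facet of $P'$ is combinatorially an $(n-1)$-simplex, with $n$ vertices. As a codimension-two blow-up of $(n+1)\Delta_n$ (cutting along the hyperplane whose primitive normal is the sum of the two facet normals meeting at the face), a direct intersection computation shows the new facet is combinatorially the prism $\Delta_1\times\Delta_{n-2}$, with $2(n-1)$ vertices. For $n\geq 3$ we have $n<2(n-1)$ (already at $n=3$ this distinguishes a triangle from a quadrilateral), so no $\AGL(n,\Z)$-equivalence can identify the two polytopes, producing the desired contradiction. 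The only slightly delicate ingredient is the identification of the new facet of a monotone codimension-two blow-up as a prism; this is a short direct verification from the defining support hyperplane, and is essentially built into the definitions already used in the paper.
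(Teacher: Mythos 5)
Your opening moves are fine: if $P$ admits two disjoint monotone blow-ups at vertices $p_1,p_2$, then $P'$ (the blow-up at $p_1$) is monotone and still admits a monotone blow-up at $p_2$ (the facets of $P'$ through $p_2$ are unchanged, so the central normal and the size-$(n-1)$ condition are unaffected), and the facet count correctly forces $P\cong(n+1)\Delta_n$ and $P'$ equivalent to the codimension-two blow-up $Q$ of $(n+1)\Delta_n$. The gap is in your final step. You compare ``the new facet'' of $P'$ under the two constructions, but the new facet is an artifact of the construction, not an invariant of the polytope, so showing the two designated facets differ does not show the two polytopes are inequivalent. Worse, in this instance no purely combinatorial invariant can work: the vertex truncation of a simplex and the codimension-two blow-up $Q$ are \emph{combinatorially isomorphic} --- both are combinatorial prisms $\Delta_{n-1}\times\Delta_1$ with $2n$ vertices and $n+2$ facets (the paper itself notes this for $Q$ in the proof of Theorem~\ref{thm:vertex-intro}), and each of them contains facets of both shapes you describe (a simplex facet and a $\Delta_{n-2}\times\Delta_1$ facet). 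For $n=3$ this is visible in Table~\ref{table:3d}: the two blow-ups of the simplex land on the two combinatorial triangular prisms, labelled {\tt 6} and {\tt 7}. The desired inequivalence is true, but it is a \emph{lattice} statement and needs a lattice invariant --- e.g.\ normalized volume ($56$ vs.\ $54$ in dimension $3$), or the observation that every vertex of the vertex-truncated monotone simplex lies on an edge of length $2<n$ (so by Lemma~\ref{lemma:blowups} it admits no further vertex blow-up), whereas $Q$ does admit one by Theorem~\ref{thm:vertex-intro}.

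Note also that the statement follows more directly from the ingredients already in the paper, without the ``blow up one, then classify again'' detour: by Theorem~\ref{thm:vertex-intro}, $P$ is $(n+1)\Delta_n$ or $Q$; in either polytope any two vertices admitting monotone blow-ups are joined by an edge of length at most $n+1$ (Lemma~\ref{lemma:lengths}, or the explicit vertex coordinates of $Q$), while two disjoint vertex blow-ups along a common edge each remove a slab of depth $n-1$ and hence need edge length greater than $2(n-1)$, which exceeds $n+1$ once $n\geq 3$. If you want to keep your structure, replace the facet-shape comparison with one of the lattice invariants above and the proof goes through.
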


On the other hand,  \emph{we classify completely the monotone blow-ups of arbitrary codimension in the monotone simplex $(n+1)\Delta_n$}:

\begin{theorem}
\label{thm:simplex-intro}
The monotone simplex $(n+1)\Delta_n$ admits disjoint blow-ups at two faces $F_1,F_2$ if and only if $F_1$ and $F_2$ are disjoint and their codimensions add up to either $n+2$ or $n+1$.
\end{theorem}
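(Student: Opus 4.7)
The plan is to parameterize faces of the simplex by subsets $S\subset\{0,1,\ldots,n\}$ using the primitive facet normals $u_0,\ldots,u_n$, which satisfy $\sum_{i=0}^n u_i = 0$ and any $n$ of which form a lattice basis. Setting $F_i = \{x:\langle u_i,x\rangle = 1\}$ and letting $v_j$ denote the vertex opposite $F_j$, the face $F_S=\bigcap_{i\in S}F_i$ has codimension $|S|$ and vertex set $\{v_j:j\notin S\}$; the monotone blow-up of $F_S$ attaches the facet $\{\langle u_S,x\rangle = 1\}$ with $u_S = \sum_{i\in S}u_i$. A direct computation gives the pairing rule $\langle u_S,v_\ell\rangle = |S|$ if $\ell\notin S$ and $\langle u_S,v_\ell\rangle = |S|-1-n$ if $\ell\in S$.

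The faces $F_{S_1}$ and $F_{S_2}$ are vertex-disjoint iff $S_1\cup S_2 = \{0,\ldots,n\}$; writing $T = S_1\cap S_2$, their codimensions sum to $n+1+|T|$, so the theorem reduces to showing that disjoint monotone blow-ups exist iff $|T|\leq 1$. The central identity, obtained from $\sum_i u_i = 0$ together with the fact that each $i\in T$ is double-counted on the left, is
\[
u_{S_1}+u_{S_2}=u_T,
\]
which yields $\langle u_{S_1},x\rangle+\langle u_{S_2},x\rangle = \langle u_T,x\rangle \leq |T|$ on the simplex.

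For the $(\Leftarrow)$ direction, when $|T|\leq 1$ the bound above becomes $\langle u_{S_1},x\rangle+\langle u_{S_2},x\rangle\leq 1$ on the simplex, so the two cut regions $\{\langle u_{S_i},x\rangle>1\}$ are disjoint and the two new facets never meet. The combined polytope thus decomposes locally as two independent single blow-ups, inheriting smoothness from the classical single-blow-up case (each vertex is incident to at most one new facet) and monotonicity by construction; the origin remains the unique interior lattice point because the polytope lies inside the simplex.

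For the $(\Rightarrow)$ direction, suppose $|T|\geq 2$, and pick $j\in S_2\setminus T$ and $k\in S_1\setminus T$, both nonempty since $|S_i\setminus T|=|S_i|-|T|\geq 1$ (otherwise $S_i\subseteq T\subseteq S_{3-i}$ would force $|S_{3-i}|=n+1$, impossible). The first blow-up introduces on the edge $v_jv_k$ the new vertex
\[
p = (1-t)v_j+tv_k,\qquad t=\frac{|S_1|-1}{n+1},
\]
obtained by solving $\langle u_{S_1},p\rangle=1$. Applying the pairing rule,
\[
\langle u_{S_2},p\rangle = |S_1|+|S_2|-n-2 = |T|-1.
\]
If $|T|=2$, then $\langle u_{S_2},p\rangle=1$, so $p$ lies on both cut hyperplanes; it is a vertex of the candidate combined polytope incident to $F_{S_1}$, $F_{S_2}$, and the $n-1$ original simplex facets containing the edge $v_jv_k$, for a total of $n+1$ facets, violating simplicity and hence smoothness. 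If $|T|\geq 3$, then $\langle u_{S_2},p\rangle>1$, so the second cut removes the vertex $p$ produced by the first blow-up, meaning it modifies the exceptional facet $F_{S_1}$ (rather than acting only on $F_{S_2}$), so the two blow-ups cannot be realized as disjoint monotone blow-ups. The main subtlety in a careful write-up is the $|T|=2$ case: one must verify that $p$ really appears as a vertex of the combined polytope, which holds because all remaining facet inequalities are strict at the interior point $p$ of $v_jv_k$.
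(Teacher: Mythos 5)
Your proof is correct and follows essentially the same route as the paper's: both arguments reduce to the single edge of the (complete-graph) simplex joining the two disjoint faces, on which the blow-up at a codimension-$k_i$ face cuts off a segment of length $k_i-1$ from an edge of length $n+1$, so the blow-ups are disjoint iff $k_1+k_2\le n+2$, while disjointness of the faces forces $k_1+k_2\ge n+1$. Your identity $u_{S_1}+u_{S_2}=u_T$ is a pleasant global repackaging of the sufficiency direction, but the underlying computation ($\langle u_{S_2},p\rangle=|T|-1$ at the first cut point) is the same edge arithmetic the paper uses in Proposition~\ref{prop:simplex}.
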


\begin{corollary}
For every $n\geq 4$, the monotone simplex $(n+1)\Delta_n$ admits two disjoint monotone blow-ups at faces of codimensions $>2$. 
\end{corollary}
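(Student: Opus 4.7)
The plan is to derive the corollary as a direct application of \cref{thm:simplex-intro}, reducing everything to a small combinatorial exercise on the vertex set of $\Delta_n$.

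First I would translate the problem into the combinatorics of the $(n+1)$-simplex. Faces of $\Delta_n$ are in bijection with nonempty subsets of the vertex set $V=\{v_0,\ldots,v_n\}$: a subset $S\subseteq V$ corresponds to the face $F_S=\Conv(S)$, which has dimension $|S|-1$ and codimension $n+1-|S|$. Under this bijection, $F_{S_1}$ and $F_{S_2}$ are disjoint if and only if $S_1\cap S_2=\emptyset$. The conditions in \cref{thm:simplex-intro} together with the codimension hypothesis of the corollary thus amount to finding disjoint subsets $S_1,S_2\subseteq V$ satisfying
$$|S_1|,\,|S_2|\leq n-2 \qquad\text{and}\qquad |S_1|+|S_2|\in\{n,\,n+1\},$$
where the first inequality encodes that both codimensions exceed $2$, and the second encodes that the codimensions sum to either $n+2$ (case $|S_1|+|S_2|=n$) or $n+1$ (case $|S_1|+|S_2|=n+1$).

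Next, for each $n\geq 4$ I would exhibit such a pair explicitly. For $n\geq 5$ I would take a nearly balanced partition $V=S_1\sqcup S_2$ with $|S_1|=\lfloor (n+1)/2\rfloor$ and $|S_2|=\lceil (n+1)/2\rceil$. The larger part has size $\lceil (n+1)/2\rceil\leq n-2$ precisely when $n\geq 5$, and the sizes sum to $n+1$, so both requirements hold. For $n=4$ the balanced choice fails, since $\lceil 5/2\rceil=3=n-1$ violates the codimension bound; instead I would take two disjoint edges of $\Delta_4$, i.e.\ two disjoint pairs of vertices among the $5$ available, giving faces of codimension $3$ whose codimensions sum to $6=n+2$.

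I do not anticipate a genuine obstacle: the corollary is essentially unpacking \cref{thm:simplex-intro}. The only mild subtlety is the small case $n=4$, where the balanced partition is unavailable and one must use a sub-maximal pair; once the case split is handled, \cref{thm:simplex-intro} delivers the disjoint monotone blow-ups immediately.
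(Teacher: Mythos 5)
Your proposal is correct: the paper states this corollary without proof, and your derivation is exactly the intended one, namely a direct application of Theorem~\ref{thm:simplex-intro} after translating faces of the simplex into vertex subsets and exhibiting an explicit pair (a near-balanced partition for $n\geq 5$, two disjoint edges for $n=4$). The arithmetic checks out in all cases, so nothing further is needed.
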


\subsection{Symplectic and algebraic geometric results}

Theorem~\ref{thm:vertex-intro} and Theorem~\ref{thm:simplex-intro}  translate to the symplectic world as follows:

\begin{theorem}
\label{thm:vertex-symplectic}
The only monotone symplectic toric manifolds that admit a monotone toric blow-up at a point are the complex projective space $\C P^n$ and the result of a blow-up in $\C P^n$ at a $(\C^*)^n$-orbit of complex 
codimension two.
\end{theorem}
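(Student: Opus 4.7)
The plan is to prove \cref{thm:vertex-symplectic} as a direct translation of \cref{thm:vertex-intro} via the Delzant correspondence. I would first set up the dictionary: Delzant's theorem yields a bijection between symplectic toric manifolds (up to equivariant symplectomorphism) and Delzant polytopes (up to $\AGL(n,\Z)$-equivalence). Under this bijection, \emph{monotone} symplectic toric manifolds correspond to \emph{monotone} (i.e., smooth reflexive) polytopes, as recalled in the introduction. In particular, the standard simplex $\Delta_n$ corresponds to $\C P^n$, and its monotone rescaling $(n+1)\Delta_n$ corresponds to $\C P^n$ endowed with its monotone symplectic form.

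Next, I would translate the notion of a toric blow-up. Fixed points of the Hamiltonian $(S^1)^n$-action on a symplectic toric manifold are in bijection with the vertices of the associated polytope, and more generally $(\C^*)^n$-orbits of complex codimension $k$ correspond to faces of codimension $k$ of the polytope. A toric blow-up along such an orbit is equivariantly symplectomorphic to the symplectic cut that truncates the associated face, which is exactly a polyhedral blow-up at that face. Moreover, under this correspondence the blow-up is monotone (on the symplectic side) if and only if the resulting polytope is again monotone, so that the notion of monotone blow-up matches on both sides of the dictionary.

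With the dictionary in place, the theorem follows directly from \cref{thm:vertex-intro}. Indeed, the monotone symplectic toric manifolds that admit a monotone toric blow-up at a point are in bijection with the monotone polytopes that admit a monotone blow-up at a vertex; by \cref{thm:vertex-intro}, these are exactly $(n+1)\Delta_n$ and the polytope obtained from $(n+1)\Delta_n$ by blowing up a codimension-two face. Translating back through the Delzant correspondence, the first case gives $\C P^n$ and the second case gives the toric blow-up of $\C P^n$ at the $(\C^*)^n$-orbit of complex codimension two associated with that codimension-two face, which is the claimed statement.

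The only real issue to be careful about is bookkeeping: one must verify that the symplectic notion of ``monotone toric blow-up at a point'' used by McDuff agrees with the polyhedral notion of ``monotone blow-up at a vertex'' employed in \cref{thm:vertex-intro}. This is standard in the Delzant dictionary (the blow-up corresponds to a symplectic cut at the fixed point, and its monotonicity is read off the resulting polytope), so no new ingredient beyond \cref{thm:vertex-intro} is required.
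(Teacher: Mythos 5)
Your proposal is correct and matches the paper's approach exactly: the paper also obtains \cref{thm:vertex-symplectic} as an immediate translation of \cref{thm:vertex-intro} through the Delzant correspondence, with no additional symplectic argument. The dictionary details you spell out (vertices correspond to fixed points, polyhedral blow-ups to symplectic cuts, monotonicity read off the polytope) are precisely what the paper relies on implicitly.
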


For a description of 
the symplectic geometry of $\C P^n$ and how it relates to the geometry/combinatorics of the $n$-simplex see for example~\cite[Section 7.1]{DuistermaatPelayo} and \cite[Example 5.2]{PelayoBAMS2017}.

\begin{theorem}
\label{thm:simplex-symplectic}
The complex projective space $\C P^n$ admits two disjoint monotone toric blow-ups if and only if the exceptional divisors of the blow-ups have 
complex dimensions adding up to $n-2$ or $n-1$.
\end{theorem}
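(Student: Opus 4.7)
The plan is to reduce the statement to its polyhedral counterpart, \Cref{thm:simplex-intro}, by translating every symplectic-toric notion through the Delzant correspondence. Recall that this correspondence sets up a bijection between monotone symplectic toric $2n$-manifolds (up to equivariant symplectomorphism) and monotone Delzant $n$-polytopes (up to $\AGL(n,\Z)$-equivalence), and that it sends $\C P^n$ to the monotone simplex $(n+1)\Delta_n$. Under the same dictionary, the closures of $(\C^*)^n$-orbits in the toric manifold are in inclusion-preserving bijection with the faces of the momentum polytope, and a face of real dimension $d$ corresponds to a toric subvariety of complex dimension $d$. An equivariant blow-up along such a subvariety corresponds to the polyhedral blow-up at the associated face, and monotonicity is preserved on one side exactly when it is preserved on the other.

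With this in hand, the argument is almost automatic. First I would state precisely what ``disjoint monotone toric blow-ups'' mean and observe that two such blow-ups of $\C P^n$ are disjoint if and only if the corresponding faces $F_1,F_2$ of $(n+1)\Delta_n$ are disjoint; here I would only have to remark that the blow-up operations commute and their exceptional loci do not meet precisely when the centers do not meet, which happens iff $F_1\cap F_2=\emptyset$ at the polytope level. Second, I would invoke \Cref{thm:simplex-intro} to conclude that such a pair of disjoint monotone blow-ups exists iff the codimensions $k_1,k_2$ of $F_1,F_2$ in the simplex satisfy $k_1+k_2\in\{n+1,n+2\}$.

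To finish, I would translate codimensions back into the dimensions of the exceptional divisors. Since a face of codimension $k$ in $(n+1)\Delta_n$ has dimension $n-k$, and since the center of the toric blow-up (and hence the base of the $\C P^{k-1}$-bundle forming the exceptional divisor) has complex dimension equal to the dimension of the corresponding face, the sum of the relevant exceptional dimensions is
\[
(n-k_1)+(n-k_2) \;=\; 2n-(k_1+k_2) \;\in\; \{\,n-2,\;n-1\,\},
\]
which is exactly the stated numerical condition. The converse direction follows by reversing the same chain of equivalences.

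There is no genuine obstacle beyond a clean formulation of the Delzant dictionary in the monotone setting; the entire substance of the proof is carried by \Cref{thm:simplex-intro}. The only place one must be careful is in matching the notion of ``monotone toric blow-up'' on the symplectic side with the polyhedral blow-up used in \Cref{thm:simplex-intro}, and in verifying that disjointness of exceptional divisors at the level of the manifold is literally disjointness of faces at the level of the polytope; both checks are standard consequences of the orbit-face correspondence and may be cited rather than proved.
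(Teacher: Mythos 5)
Your proposal is correct and follows the same route as the paper, which offers no separate argument for this statement beyond invoking the Delzant dictionary to translate Theorem~\ref{thm:simplex-intro} (faces of $(n+1)\Delta_n$ correspond to orbit closures, polyhedral blow-ups to equivariant blow-ups, codimension $k$ to complex dimension $n-k$). You also correctly read the ``complex dimensions of the exceptional divisors'' as the dimensions of the blow-up centers (the bases of the $\C P^{k-1}$-bundles), which is the interpretation under which the stated numerical condition $2n-(k_1+k_2)\in\{n-2,n-1\}$ matches Theorem~\ref{thm:simplex-intro}.
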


In toric algebraic geometry, smooth reflexive polytopes correspond bijectively to (smooth) Fano toric varieties~\cite[Theorem 8.3.4]{CLS}. Theorem~\ref{thm:simplex-symplectic} applies then to Fano varieties, and Theorem~\ref{thm:vertex-symplectic} translates to the following statement:

\begin{theorem}[\protect{\cite[Theorem 1]{Bonavero}}]
The only Fano toric varieties that admit a monotone equivariant blow-up at a point are the complex projective space $\C P^n$ and the result of a codimension-two equivariant blow-up in $\C P^n$.
\end{theorem}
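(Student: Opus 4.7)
The plan is to derive this statement as a direct translation of the polyhedral \Cref{thm:vertex-intro} through the Delzant-type dictionary between smooth reflexive (monotone) polytopes and smooth Fano toric varieties. So the proof is essentially a bookkeeping exercise, and the ``hard work'' has already been carried out in the combinatorial argument establishing \Cref{thm:vertex-intro}.

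First I would recall the bijective correspondence, via the fan of inner normals, between smooth reflexive $n$-polytopes (up to $\AGL(n,\Z)$-equivalence) and smooth Fano toric $n$-varieties together with their anticanonical polarization (this is \cite[Theorem 8.3.4]{CLS}). Under this dictionary, vertices of the polytope $P$ correspond bijectively to $(\C^*)^n$-fixed points of the associated toric variety $X_P$, and the star subdivision of the normal fan at the cone dual to a vertex corresponds to the equivariant blow-up of $X_P$ at that fixed point. Moreover, the combinatorial operation of ``polyhedral blow-up at a vertex'' as used in the proof of \Cref{thm:vertex-intro} is defined precisely so that the resulting polytope is again lattice (in particular smooth-Delzant) and has the origin as interior point; the requirement that it be again reflexive/monotone coincides with the equivariant blow-up of $X_P$ being again a Fano toric variety in the monotone sense.

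Next I would combine these two observations: a monotone equivariant blow-up at a point of a Fano toric variety $X_P$ exists if and only if the corresponding smooth reflexive polytope $P$ admits a monotone polyhedral blow-up at the corresponding vertex. Applying \Cref{thm:vertex-intro}, the only polytopes $P$ with this property are (up to $\AGL(n,\Z)$) the monotone simplex $(n+1)\Delta_n$ and the blow-up of $(n+1)\Delta_n$ along a codimension-two face. Translating back, the toric variety associated to $(n+1)\Delta_n$ is the complex projective space $\C P^n$ (with its standard anticanonical polarization), and the toric variety associated to the codimension-two blow-up of $(n+1)\Delta_n$ is the equivariant blow-up of $\C P^n$ along the corresponding torus-invariant codimension-two subvariety. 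This yields exactly the two varieties in the statement.

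The only step requiring any care is making explicit the correspondence between the polyhedral blow-up operation used in \Cref{thm:vertex-intro} and the equivariant algebraic blow-up, together with the fact that both preserve the smoothness and the monotonicity/reflexivity condition precisely when the combinatorial monotone-blow-up is well-defined. This is a standard piece of toric dictionary (see \cite[Chapter 3]{CLS} for the star-subdivision--blow-up equivalence and \cite[Chapter 8]{CLS} for the reflexivity--Fano equivalence), so no genuinely new difficulty arises in this algebraic version once the polyhedral theorem is in hand.
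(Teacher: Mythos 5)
Your proposal matches the paper's approach exactly: the paper also obtains this statement purely as a translation of Theorem~\ref{thm:vertex-intro} through the bijection between smooth reflexive polytopes and smooth Fano toric varieties from \cite[Theorem 8.3.4]{CLS}, with the polyhedral blow-up at a vertex corresponding to the equivariant blow-up at a torus-fixed point. No issues.
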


\subsection{Structure of the paper}
We provide concise and elementary proofs of these theorems in Section~\ref{theproofs} of the paper. In the previous sections we recall some basic concepts and results from symplectic geometry and polytope theory, to make it as self\--contained as
possible. In the final section we make some concluding remarks and pose an open question.

\subsection*{Acknowledgements}
The first author is funded by a BBVA (Bank Bilbao Vizcaya Argentaria) Foundation Grant for Scientific Research Projects with project title \emph{From Integrability to Randomness in Symplectic and Quantum Geometry}. He also thanks the Dean of the School of Mathematics of the Complutense University of Madrid Antonio Br\'u and the Chair of the Department of Algebra, Geometry and Topology Rutwig Campoamor for their support and  excellent resources he is being provided with to carry out the aforementioned project. 
The  second author is funded by grants PID2019-106188GB-I00  PID2022-137283NB-C21  of MCIN/AEI/10.13039/501100011033 and by project CLaPPo (21.SI03.64658) of Universidad de Cantabria and Banco Santander.

We thank M\'onica Blanco, Luis Crespo, Christian Haase, Benjamin Nill and Andreas Paffenholz for discussions on the topic of this paper.

We are grateful to the Department
of Mathematics, Statistics and Computation at the University of Cantabria for inviting the first author in July 2023 for a visit during which the majority of this paper was written. The first author is also thankful to Carlos Beltr\'an and Fernando Etayo for the hospitality during his visit and to the Governing Council of the UIMP (Universidad Internacional Men\'endez Pelayo), chaired by Rector Carlos Andradas, for the hospitality during his visit and  for providing him with a stimulating environment for the completion of this paper.

\section{Monotone symplectic toric manifolds and blow-ups}
In this section we recall what \emph{monotone symplectic manifolds} are and what are their corresponding \emph{monotone polytopes}.

For more details on symplectic geometry and Hamiltonian torus actions we refer to 
Cannas da Silva~\cite{AC},  Guillemin\--Sjamaar~\cite{GSj05}, McDuff--Salamon~\cite{McduffSalamon} and
Pelayo~\cite{PelayoBAMS2017}. 
See also 
Abraham\--Marsden~\cite{AbMa}, De Le\'on\--Rodrigues~\cite{LeRo} 
and Marsden\--Ratiu~\cite{MaRa} for references on symplectic geometry with an emphasis on the point of view of classical mechanics.

\subsection{Symplectic toric manifolds}

Symplectic toric manifolds are the symplectic analogues of toric varieties, and they have been intensively studied in equivariant symplectic geometry in the last four decades.

\begin{definition}[Symplectic toric manifold]
 A compact connected symplectic $2n$-dimensional manifold $(M,\omega)$ endowed with an effective Hamiltonian action of a $n$-dimensional torus $\mathbb{T}^n$ with momentum map 
 $$\mu\colon M \to \mathbb{R}^n$$ is called a \emph{symplectic toric manifold} and often denoted as a quadruple $(M,\omega,\mathbb{T}^n,\mu)$. 
\end{definition}

By a theorem (1982) of Atiyah \cite{At82} and Guillemin-Sternberg \cite{GS82} the image $\mu(M) \subset \mathbb{R}^n$ is a convex polytope, and obtained as the convex hull of the images of the fixed points of the $\mathbb{T}^n$-action on $M$ (in fact, their theorem was much more general, it applied to Hamiltonian $\mathbb{T}^m$-actions on compact connected $2n$-dimensional symplectic manifolds $(M,\omega)$ with $m \leq n$).

 Shortly after the work by the aforementioned authors, Delzant proved (1988) that any two such quadruples are isomorphic if and only if they have the same ``momentum polytope'' up to translations in $\mathbb{R}^n$ and ${\GL}(n,\mathbb{Z})$ transformations. Moreover, Delzant proved that $\mu(M)$  is a \emph{smooth polytope} (called a \emph{Delzant polytope} in symplectic geometry) and that this polytope completely classifies the symplectic toric manifold (up to ${\GL}(n,\mathbb{Z} \rtimes \mathbb{R}^n$ transformations). We refer to \cite{PELAYO2007}, \cite{Pelayo2023} or \cite{PelayoSantos2023} for a more detailed discussion of these results and a precise statement of the Delzant correspondence theorem.

\subsection{Monotone symplectic toric manifolds}

We are interested in the paper in symplectic toric manifolds which are also monotone in the following sense.

\begin{definition}[Monotone symplectic manifold]
A (compact connected) symplectic manifold is \emph{monotone} if there is $\lambda>0$ such that
\[
[\omega]=\lambda {\rm c}_1(M), 
\]
where ${\rm c}_1(M)$ denotes the first Chern class of $M$ (with respect to any almost complex structure compatible with $\omega$). 
\end{definition}

By rescaling, we may assume $\lambda=1$. Monotone symplectic manifolds have attracted significant interest in symplectic geometry and topology in recent years, and there are a number of important open problems concerning their structure. For recent works about monotone symplectic manifolds from the angle of toric geometry we refer to the papers by Fanoe~\cite{Fanoe}, Cho-Lee-Masuda-Park~\cite{CLMP22}, McDuff~\cite{McDuff} and Charton-Sabatini-Sepe~\cite{CSS23} as well as to the references in these papers.

McDuff  calls a Delzant polytope \emph{monotone} if it is the momentum map of a monotone toric manifold. 
She observes (\cite[Remark 3.2]{McDuffLagrangian}, \cite[p.~151, footnote]{McDuff}) that (modulo taking $\lambda=1$ plus a lattice translation) monotone polytopes are the same as the \emph{smooth reflexive polytopes} that have been studied in algebraic geometry in the context of toric varieties~\cite{Batyrev, GodinhoHeymannSabatini, HaaseMelnikov06, Nill}, since they correspond bijectively to Fano toric varieties~\cite[Theorem 8.3.4]{CLS}.
Here a \emph{smooth} polytope is a simple lattice polytope $P$ whose normal fan is unimodular and a smooth polytope is called \emph{reflexive} if it contains the origin in its interior and the dual polytope of $P$ is also a lattice polytope.

In this setting, smooth reflexive polytopes are the moment polytopes of Gorenstein Fano varieties~\cite{Nill}, \cite[Theorem 8.3.4]{CLS}, and their duality properties are related to  mirror symmetry~\cite{Batyrev, Witten}.
The relation bewteen the algebraic and symplectic viewpoints on toric manifolds is explained in detail, e.g., \cite{De1988,DuistermaatPelayo}.
See~\cite{CSS23} for the relation between monotone and Fano in the non-toric case.

\subsection{Monotone blow-ups of Delzant polytopes and symplectic toric manifols}

Here we briefly recall the blowing-up construction in symplectic (toric) geometry, which is a special case of Lerman's symplectic cutting \cite{LERMAN}. We follow the outline in McDuff \cite[Sections 2.2 and 3.2]{McDuff}.

 Let $(M,\omega,\mathbb{T}^n, \mu \colon M \to \mathbb{R}^n)$ be a symplectic toric manifold with momentum polytope 
\[
P:=\mu(M)=\{ x \in \mathbb{R}^n \,|\,  u_i \cdot x   \leq b_i \,\, \forall i \in \{1,  \ldots,N\} \}.
\]

At the level of polytopes, for each face $F$ of $P$ and positive number $\epsilon$, the  \emph{(toric) blow-up of $P$ along $F$ of size $\epsilon$} is
the polytope $P_{F,\epsilon}$ obtained by adding to the inequality description of $P$ a new facet 
with inequality $u_0 \cdot x \le b -\epsilon$, where $u_0$ is the sum of the normal vectors of facets of $P$ containing $F$ and $b$ is the supporting value of $u_0$ in $P$ (that is, $b=\max\{u_0\cdot x: x \in P\}$).
 The parameter $\epsilon >0$ needs to be small enough so that all vertices of $P$ not in $F$ are still in $P_{F,\epsilon}$.
 We explain this in more detail in  Definition~\ref{pl}.

The symplectic manifold corresponding to the blown-up polytope $P_{F,\epsilon}$ is constructed from $(M,\omega)$ by ``excising'' from it the set
$$
\mu^{-1}(\{x \in P\,|\,   u_0 \cdot x  > b-{\epsilon}\}),
$$
and collapsing the boundary along its characteristic flow. This resulting manifold inherits a toric structure and corresponding momentum map, which are completely 
\textcolor{red}{determined by} the polytope $P_{F,\epsilon}$ via Delzant's correspondence. 

We refer to Lerman \cite{LERMAN} and McDuff-Tolman \cite[Section 2.4, in particular Remark 2.4.4]{MCDUFFTOLMAN} for a more detailed discussion of these constructions and Pelayo \cite[Section 3]{PELAYO2007} for a discussion on the possible sizes of the $\epsilon$-blow-ups.

By a \emph{monotone blow-up} we mean a blow-up in a monotone manifold that results in another monotone manifold.

\section{Monotone, also known as smooth reflexive, polytopes}

In this section we review the notions of Delzant polytope and monotone polytope and derive some basic properties of them. For the basic theory of polytopes we refer to the textbooks \cite{triangbook, Ziegler}. For lattice polytopes see
\cite{HaaseNillPaffenholz}. For their relation to toric varieties see \cite{CLS,Ewald, Fulton, Oda}.

\subsection{Delzant polytopes}

\begin{definition}[Delzant polytope]
A \emph{Delzant polytope} or \emph{smooth polytope}\footnote{In some literature a smooth polytope is required to have integer vertices so that ``smooth''=``lattice Delzant polytope. Since we are interested only in the reflexive case, which necessarily implies integer vertices, this distinction is not important for us} 
is a simple full-dimensional polytope in $\R^n$ with rational edge directions and such that the primitive edge-direction vectors at each vertex form a basis of the lattice $\Z^n$. Equivalently, it is a polytope with a simplicial and unimodular normal fan.
\end{definition}

For each facet $F$ of a rational polytope $P$ let us denote $u_F$ the primitive (exterior) normal vector to $F$,
so that the irredundant inequality description of $P$ is
\[
P=\{ x \in \mathbb{R}^n \,|\,  u_F \cdot x   \leq b_F, \,\, \text{ $F$ a facet of $P$} \},
\]
for some real constants $b_F$.
We extend the notation to lower dimensional faces. For a face $G$ of $P$ we denote
\[
u_G:= \sum_{G\subset F,\ F \text{ a facet}} u_F.
\]
We call $u_G$ the \emph{central normal vector} of the face $G$.

We always consider polytopes modulo $\AGL(n,\Z)$-equivalence. That is, $P$ si considered equivalent to $P'$ if (and only if) there is an integer $n\times n$ matrix $A$ of determinant $\pm 1$ (that is, with integer inverse) and an integer translation vector $t\in \Z^n$ such that the map $x\mapsto Ax+t$ sends $P$ to $P'$.

The \emph{length} of a segment with rational direction will always be measured with respect to the lattice. That is, the length of a segment $ab$ is the unique constant $\ell>0$ such that the vector $\frac{1}{\ell}(b-a)$ is primitive. 

Observe that modulo $\AGL(n,\Z)$ if $P$ is a smooth polytope and $v$ is a vertex of it, without loss of generality we can assume that $v=(0,\dots,0)$ and that the facets meeting at $v$ are defined by $x_i\geq 0$ for every coordinate.

\begin{example}[The smooth simplex]
\label{exm:simplex}
If $P$ is a smooth simplex and, as said above, we assume without loss of generality that the origin is a vertex and that $n$ of its $n+1$ facets have normals $-{\rm e}_1,\dots,-{\rm e}_n$ then the remaining facet must have normal vector $u=(u_1,\dots,u_n)$ with all $u_i$ strictly positive (in order to get a bounded polytope) and equal to $1$ (in order for $n$ to be unimodular with any $n-1$ of the other $n$ vectors. Hence, $P$ is of the form
\[
\left \{x\in \R^n: x_i\geq 0 \,\, \forall i \right\}\cap\Big\{ x\in \R^n: \sum_{i=1}^n x_i \leq b\Big\}
 =\Conv\{\zero, b {\rm e}_1,\dots , b {\rm e}_n\}.
\]
for some positive constant $b$ which equals the length of every edge in $P$. We call this polytope  the smooth simplex of size $b$. If $b=1$ we call it the smooth unimodular simplex and denote it $\Delta_n$.
\end{example}

\begin{definition}[Blow-up of a Delzant polytope] \label{pl}
Let $F$ be a face of a Delzant polytope $P$. Let $b\in \R$ be the unique constant such that $a_F\cdot x = b$ for every $x \in F$, and let $\epsilon >0$ be a constant such that $u_F \cdot v < b-\epsilon$ for every vertex of $P$ not in $F$. The \emph{blow-up} of $P$ at $F$ of size $\epsilon$ is the polytope 
\[
P_{F,\epsilon} := P \cap\{x\in \R^n: u_F\cdot x \leq b-\epsilon\}.
\]
\end{definition}

It is easy  to check that $P_{F,\epsilon}$ is still a Delzant polytope. Its combinatorics is that the face $F$ is replaced by a new facet combinatorially isomorphic to $F\times \Delta_{k-1}$, where $k$ is the codimension of $F$ and $\Delta_{k-1}$ denotes the $(k-1)$-simplex, and every face $F'$ of $P$ containing $F$ suffers the same change. Here and in what follows two polytopes are said \emph{combinatorially isomorphic} if their posets of faces are isomorphic.

Let $L$ be the linear subspace of codimension $k$ parallel to $F$.
Observe that the linear projection $\pi_F: \R^n \to \R^n/L$ in the direction of $F$ sends $P$ to a polytope that may perhaps not be globally smooth but which is smooth at its vertex $v:=F/ L$. If $P'$ is a blow-up of $P$ at $F$ then $\pi(P')$ is obtained from $\pi(P)$ by cutting out a smooth $k$-simplex $\Delta$ with vertex $v$ and of size $\epsilon$. The fiber $\pi^{-1}(x) \cap P$ of every $x\in \Delta$ is a polytope combinatorially isomorphic to (and with the same normal fan as) $F$.\footnote{In other words, $P\setminus P'$ is an example of what McDuff calls a bundle over $\epsilon \Delta_k$ with fiber $F$.}

The definition easily implies the following lemma:
\begin{lemma}
\label{lemma:edges}
A blow-up at $F$ of size $\epsilon$ can be performed if, and only if, every edge with exactly one end in $F$ has length strictly greater than $\epsilon$.
\end{lemma}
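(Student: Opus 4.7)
My plan is to translate the algebraic condition of Definition~\ref{pl} into a geometric statement about the lattice distances from vertices outside $F$ to the supporting hyperplane $\{u_F\cdot x=b\}$ of $F$, and then argue that the minimum such distance is realized along an edge leaving $F$.

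First I would carry out a local analysis at a vertex $v\in F$. Label the facets of $P$ meeting $v$ as $F_1,\dots,F_n$, with $F_1,\dots,F_k$ being precisely those containing $F$ (so $k=\operatorname{codim}F$). The Delzant condition supplies primitive edge vectors $w_1,\dots,w_n$ at $v$ forming a $\Z$-basis, whose dual $\Z$-basis is $-u_{F_1},\dots,-u_{F_n}$; that is, $u_{F_i}\cdot w_j=-\delta_{ij}$. Since $u_F=u_{F_1}+\cdots+u_{F_k}$, I read off $u_F\cdot v=b_{F_1}+\cdots+b_{F_k}=b$, together with $u_F\cdot w_j=-1$ for $j\leq k$ (the edges at $v$ not lying in $F$) and $u_F\cdot w_j=0$ for $j>k$ (the edges in $F$). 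Hence $u_F$ never increases along an edge of $P$ leaving a vertex of $F$, so $u_F\leq b$ on all of $P$ with equality precisely on $F$. Moreover, for an edge of lattice length $\ell$ joining $v\in F$ to $v'\notin F$, the outer endpoint satisfies $u_F\cdot v'=b-\ell$.

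Next I would rewrite the condition in Definition~\ref{pl} as $\epsilon<\epsilon^*$, where $\epsilon^*:=\min\{b-u_F\cdot v': v'\text{ a vertex of }P,\ v'\notin F\}$, and compare $\epsilon^*$ with the minimum lattice length $\ell^*$ of an edge with exactly one endpoint in $F$. The calculation in the previous step already gives $\epsilon^*\leq \ell^*$, since each such edge contributes an endpoint $v'$ with $b-u_F\cdot v'=\ell$. For the reverse inequality I would invoke the simplex method on the linear program $\max_{P}u_F$: since any vertex $v'\notin F$ has $u_F\cdot v'<b$, it is not a local maximum, hence some edge at $v'$ strictly increases $u_F$; iterating produces a $u_F$-monotone edge-path from $v'$ that eventually enters $F$. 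The last edge of this path has exactly one endpoint in $F$, and its length is at most $b-u_F\cdot v'$, so $\ell^*\leq b-u_F\cdot v'$ for every $v'\notin F$, giving $\ell^*\leq \epsilon^*$.

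Combining the two bounds yields $\epsilon^*=\ell^*$, and the condition $\epsilon<\epsilon^*$ of Definition~\ref{pl} is precisely the requirement that every edge with exactly one end in $F$ has length strictly greater than $\epsilon$. The one genuinely non\--formal ingredient is the simplex\--method argument in Step~3 showing that a vertex realizing $\epsilon^*$ must be adjacent to $F$; everything else is a direct consequence of the dual basis identity $u_{F_i}\cdot w_j=-\delta_{ij}$ furnished by the Delzant condition.
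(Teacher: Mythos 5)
Your proof is correct. The paper itself offers no argument for this lemma (it is introduced with ``the definition easily implies''), so there is nothing to compare line by line; your write-up supplies exactly the content that is being taken for granted. The two essential points are the ones you isolate: the dual-basis identity $u_{F_i}\cdot w_j=-\delta_{ij}$ coming from smoothness, which shows that an edge of length $\ell$ from a vertex of $F$ to a vertex $v'\notin F$ satisfies $u_F\cdot v'=b-\ell$ (so the edge condition is necessary); and the monotone edge-path (simplex-method) argument showing that the minimum of $b-u_F\cdot v'$ over vertices $v'\notin F$ is attained at a vertex adjacent to $F$, which gives sufficiency. One small point worth making explicit is why the maximum face of $u_F$ on $P$ is exactly $F$ (so that your monotone path really terminates in $F$ and a vertex outside $F$ really has $u_F\cdot v'<b$): this follows either from your local computation, since a strictly larger face in $\{u_F\cdot x=b\}$ would contain some edge direction $w_j$ with $j\leq k$ and $u_F\cdot w_j=-1\neq 0$, or directly from the fact that $u_F$ lies in the relative interior of the normal cone of $F$. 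With that noted, the argument is complete.
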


\subsection{Smooth reflexive polytopes}

\begin{definition}[Reflexive polytope]
A \emph{reflexive polytope} is a lattice polytope (i.e., a polytope with integer vertices) and whose dual also has integer vertices. Equivalently, a lattice polytope is reflexive if and only if every facet-supporting hyperplane is of the form $u_F \cdot x=1$, where $u_F$ is the primitive vector normal to the facet.
\end{definition}

Observe that every reflexive polytope has the origin as the unique interior lattice point. In particular,  for reflexive polytopes $\AGL(n,\Z)$-equivalence is the same as $\GL(n,\Z)$-equivalence. For example, it follows from the description in Example~\ref{exm:simplex} that the only monotone simplex (modulo $\GL(n,\Z)$-equivalence) is 
\[
\{x\in \R^n : x_i\geq -1 \ \forall i \text{ and } \sum_i x_i \leq 1\} = -\one + 
(n+1) \Delta_n \cong 
(n+1) \Delta_n,
\]
were $\one := (1,\dots,1) \in \R^n$, and we use ``$\cong$'' to express combinatorial isomorphism.
We call this \emph{the} monotone simplex. Observe that all its edges have length $n+1$.

\begin{definition}[Monotone polytope]
A \emph{monotone polytope} is a polytope that is both smooth and reflexive. A \emph{monotone blow-up} in a monotone polytope is a blow-up that results in another monotone polytope.
\end{definition}

Monotone polytopes receive this name in the toric symplectic literature because they coincide (modulo a global dilation) with the moment polytopes of monotone  symplectic toric manifolds. (See, e.g., \cite{CSS23,McDuff}).
In the lattice polytope literature, and in the algebraic geometry literature, they are usually called \emph{smooth reflexive}.

It is easy to prove that there are finitely many reflexive polytopes in every fixed dimension~\cite{LagariasZiegler}.
(Here and elsewhere rational polytopes are always considered modulo $\AGL(n,\Z)$-equivalence). In particular, there are also finitely many monotone polytopes. They have been enumerated up to dimension 9 and their number equals
\[
\begin{array}{c|ccccccccc}
\text{dimension}& 1& 2& 3& 4& 5& 6& 7& 8& 9 \\
\hline
\text{\# of monotone polytopes} &1& 5& 18& 124& 866& 7622& 72256& 749892& 8229721\\
\end{array}
\]
The enumeration up to dimension eight was done by
{\O}bro~\cite{OebroSmoothFano}.
It was extended up to dimension nine by Lorenz and Paffenholz~\cite{LorenzPaffenholz}.

By definition, in a reflexive polytope every facet-supporting hyperplane is of the form $\{u_F\cdot x=1\}$. For smooth ones a similar property extends to lower-dimensional faces:

\begin{lemma}
Let $F$ be a face of codimension $k$ in a monotone polytope $P$ and let $u_F$ be its central normal vector. Then $F$ is contained in the hyperplane $\{u_F\cdot x = k\}$.
\end{lemma}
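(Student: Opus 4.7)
The plan is to unwind the definition of the central normal vector $u_F$ and apply just two properties of $P$: that it is reflexive and that it is simple. By definition, $u_F = \sum_{F\subset F'} u_{F'}$, where the sum runs over the facets $F'$ of $P$ that contain $F$. For any point $x\in F$ we therefore have
\[
u_F\cdot x \;=\; \sum_{F\subset F'} u_{F'}\cdot x.
\]
Since $P$ is reflexive, each facet $F'$ lies on the hyperplane $\{u_{F'}\cdot x=1\}$, and since $x\in F\subset F'$ each summand equals $1$. Hence $u_F\cdot x$ equals the number of facets of $P$ containing $F$.

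The remaining step is to show that this number is exactly the codimension $k$ of $F$. This is the standard fact that in a simple $n$-polytope every face of codimension $k$ is the intersection of exactly $k$ facets, and smoothness of $P$ already implies simpleness. To see it, I would pick any vertex $v$ of $F$: by simpleness, $v$ lies on exactly $n$ facets $F_1,\dots,F_n$, and the $n$ edges at $v$ are indexed by these facets so that the edge $e_i$ is the intersection of all $F_j$ with $j\neq i$. The face $F$ is then determined at $v$ by the subset of edges at $v$ that it contains, and if that subset has size $\dim F = n-k$ then $F$ lies on precisely the complementary $k$ facets.

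Combining the two steps yields $u_F\cdot x = k$ for every $x\in F$, as desired. There is no real obstacle here; the lemma is a direct consequence of the reflexivity condition on facets together with the simplicial structure of a smooth polytope, and the only thing to be careful about is to quote (or very briefly justify) that simpleness alone gives ``codimension-$k$ face = intersection of exactly $k$ facets''.
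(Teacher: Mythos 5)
Your proof is correct and follows essentially the same route as the paper's: sum the facet equalities $u_{F'}\cdot x=1$ over the facets containing $F$ and use that a codimension-$k$ face of a simple polytope lies on exactly $k$ facets. The paper takes that last fact for granted in its notation, whereas you justify it explicitly; this is a harmless (and slightly more careful) elaboration, not a different argument.
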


\begin{proof}
If $F_1,\dots,F_k$ are the facets containing $F$, we have that $u_{F_i} x=1$ holds in $F_i$ (hence in $F$) for every $i$. Since $u_F=\sum_iu_{F_i}$ we have that $u_{F} x=k$ holds in $F$.
\end{proof}

The lemma implies the following property:

\begin{lemma}
\label{lemma:blowups}
Let $F$ be a face of codimension $k$ in a montotone polytope $P$. The following properties are equivalent:
\begin{enumerate}
\item[\rm (1)] There is a monotone blow-up at $F$.
\item[\rm (2)]  Every edge with exactly one end in $F$ has length at least $k$.
\end{enumerate}
Moreover, if this happens then the monotone blow-up at $F$ has size $k-1$.
\end{lemma}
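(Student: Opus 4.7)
The plan is to combine the previous lemma, which pins down where $F$ sits along the direction $u_F$, with the primitivity of $u_F$ and with Lemma~\ref{lemma:edges}.

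First, by the previous lemma $F$ lies on the hyperplane $\{u_F\cdot x = k\}$, so in the notation of Definition~\ref{pl} the constant $b$ equals $k$, and a blow-up of size $\epsilon$ introduces the new facet $\{u_F\cdot x = k-\epsilon\}$. Next, I would check that $u_F$ is primitive: at any vertex $v\in F$, smoothness of $P$ provides a $\Z$-basis of $\Z^n$ starting with the primitive normals $u_{F_1},\dots,u_{F_k}$ of the $k$ facets containing $F$, and in this basis $u_F=u_{F_1}+\cdots+u_{F_k}$ has coordinates $(1,\dots,1,0,\dots,0)$ and is therefore primitive.

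By the remarks following Definition~\ref{pl} the polytope $P_{F,\epsilon}$ is automatically Delzant, so the only issue is reflexivity. The inequalities coming from the original facets are unchanged, hence still at lattice distance one from the origin, and the unique new facet-supporting hyperplane is $\{u_F\cdot x = k-\epsilon\}$ with $u_F$ primitive. Therefore $P_{F,\epsilon}$ is reflexive if and only if $k-\epsilon = 1$, that is, $\epsilon = k-1$; this already yields the ``moreover'' clause.

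Finally, plugging $\epsilon = k-1$ into Lemma~\ref{lemma:edges} gives that the monotone blow-up at $F$ is feasible if and only if every edge of $P$ with exactly one end in $F$ has length strictly greater than $k-1$; since $P$ is a lattice polytope its edge lengths are positive integers, so this strict inequality is equivalent to length $\geq k$, which is condition (2). The only point that requires any real care is the primitivity of $u_F$, but it reduces to reading off coordinates in a local $\Z$-basis at any vertex of $F$, so no serious obstacle is expected.
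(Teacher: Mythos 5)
Your proof is correct and follows essentially the same route as the paper's: use the preceding lemma to place $F$ on $\{u_F\cdot x=k\}$, force $\epsilon=k-1$ by reflexivity, and then convert the feasibility condition of Lemma~\ref{lemma:edges} into the edge-length bound via integrality. You additionally verify that $u_F$ is primitive (via the local $\Z$-basis of facet normals at a vertex of $F$), a point the paper leaves implicit but which is indeed needed for the reflexivity step.
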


Observe that facets can be blown-up but the blow-up simply ``pushes'' the facet toward the interior of the polytope, maintaining its normal vector and not changing the combinatorial type; moreover,  a ``monotone blow-up of codimension 1'' has size 0, by the lemma above, which means it does nothing at all to the polytope. 
Hence we always assume that the codimension of the face to be blown-up is at least two.

\begin{proof}
The ``moreover'' follows from the previous lemma and the fact that n order for the blow-up to be monotone we need the facet created by it to be contained in the hyperplane $\{u_F \cdot x=1\}$. The equivalence of (1) and (3) follows from the fact that this blow-up can be performed if and only if $$u_F\cdot  x < k-(k-1) = 1$$ holds for every vertex not in $F$, but ``$<1$'' is equivalent to ``$\leq 0$'' because vertices have integer coordinates. The same argument, now using Lemma~\ref{lemma:edges}, shows the equivalence of (1) and (2).
\end{proof}

\begin{remark}
The reason why Dusa McDuff asks for the dimension of $P$ and the codimension of the blow-ups to be at least three in Question~\ref{q:2.7} is that in the monotone triangle the three vertices can simultaneously be blown-up, and in the monotone tetrahedron two opposite edges or an edge and a vertex not incident to one another can be blown-up too. McDuff (with the help of A.~Paffenholz, see~\cite[p. 160, footnote]{McDuff}) had checked that in dimension three the answer to the question is negative: no monotone $3$-polytope admits two disjoint blow-ups of codimension three, that is, at vertices. 

We have redone the computation of all possible blow-ups among the eighteen monotone polytopes and obtained the diagram in Table~\ref{table:3d}.
In the table, the polytope labelled {\Large\textcircled{\small N}} is the one that can be obtained with the Sage command  {\tt ReflexivePolytope(3,N-1)}. Figure \ref{fig:maximal3d} shows the five maximal ones.\footnote{We here mean ``maximal'' with respect to containment or, equivalently, ``minimal'' with respect to their sets of normal vectors. In dimension three monotone polytopes with these properties coincide with the ones that cannot be obtained as a blow-up of another monotone polytope, but in higher dimension this may not be true.}
\end{remark}

\newcommand{\polylabel}[1]{ \stackrel{   {\Large\textcircled{ {\hspace{-11pt} \small #1}}}    }}
\newcommand{\poly}[2]{ \stackrel{   {\Large\textcircled{ {\hspace{-11pt} \small #1}}}    }{\text{\tiny ($\Vol$=#2)} }}
\renewcommand{\tabcolsep}{0.cm}
\begin{figure}[htb]
\begin{tabular}{ccccc}
\includegraphics[scale=0.28]{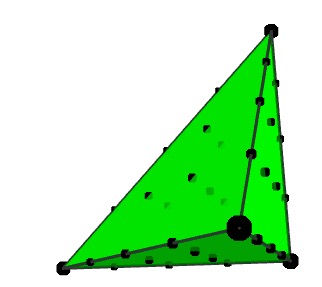}&
\includegraphics[scale=0.17]{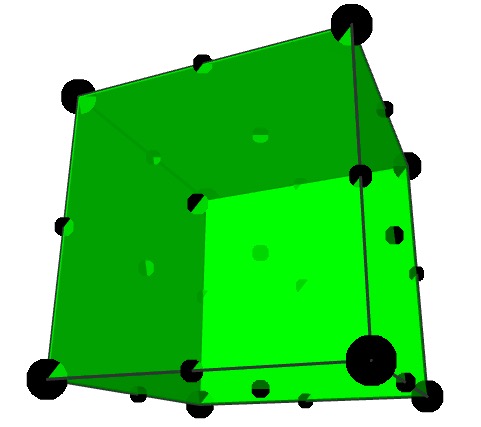}&
\includegraphics[scale=0.17]{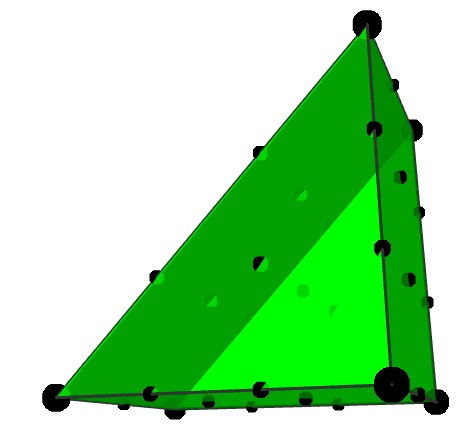}&
\includegraphics[scale=0.23]{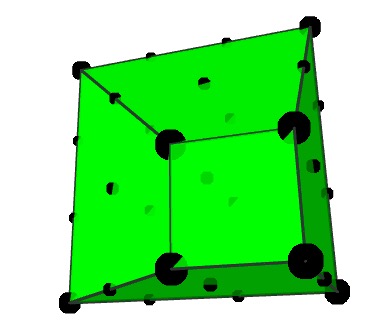}&
\includegraphics[scale=0.21]{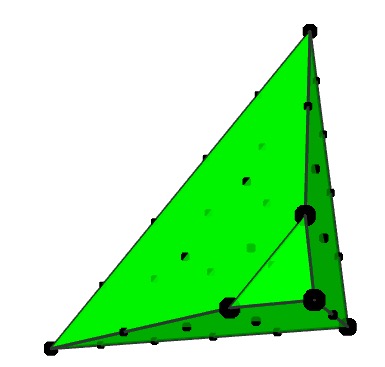}\\
$\polylabel{1}{}$ & $\polylabel{31}{}$ & $\polylabel{5}{}$ & $\polylabel{28}{}$ & $\polylabel{8}{}$ \\
\end{tabular}

\caption{The five maximal monotone $3$-polytopes: simplex, cube, triangular prism, slanted cube, and slanted prism. Pictures created with \href{https://sagecell.sagemath.org}{SageMathCell}.}
\label{fig:maximal3d}
\end{figure}

\begin{table}[htb]
\begin{tikzcd}[column sep=small]
4\ \text{facets}	& & \poly{1}{64}  \arrow[d,"v"] \arrow[dr,""] & & & &  \\
5\ \text{facets}	& \poly{8}{62}  \arrow[d,""]  & \poly{6}{56} \arrow[dl,""] \arrow[d,""] \arrow[dr,""] &\poly{7}{54} \arrow[dl,""] \arrow[d,"v"] \arrow[dr,""] & \poly{5}{54} \arrow[dl,""] \arrow[dr,""] & & & \\
6\ \text{facets}	& \poly{27}{50}  & \poly{30}{50} \arrow[dr,""] \arrow[drr,""] & \poly{26}{46} \arrow[dr,""] & \poly{25}{44} \arrow[d,""] & \poly{30}{48} \arrow[dl,""] \arrow[d,""] \arrow[dr,""]& \poly{28}{52} \arrow[dl,""]& \poly{31}{48} \arrow[dl,""]\\
7\ \text{facets}	& & & \poly{84}{46} \arrow[dr,""]& \poly{82}{40} \arrow[d,""]& \poly{83}{44} \arrow[dl,""]& \poly{85}{42} \arrow[d,""]& \\
8\ \text{facets}	& & & & \poly{\text{\tiny 220}}{ 36} && \poly{\text{\tiny 219}}{\text{\small 36}} & \\
\end{tikzcd}
\caption{\small Diagram of the blow-ups among  monotone $3$-polytopes.
 Each node represents one of the eighteen monotone $3$-polytopes, with its {\tt Sage} label and its volume (volume is normalized to the unimodular simplex, in order for it to be always an integer). 
Arrows represent monotone blow-ups. The two arrows marked $v$ are the only two blow-ups at vertices, as predicted by Theorem~\ref{thm:vertex-intro}.
The polytopes are arranged in rows according to their number $f_2$ of 2-dimensional faces. 
Each blow-up, by definition, adds one to the number of facets. 
In any simple $3$-polytope the whole $f$-vector can be obtained from $f_2$ as $f_0=2f_2-4$ and $f_1=3f_2-6$.}
\label{table:3d}
\end{table}

\section{Proof of the main results} \label{theproofs}

Blowing up monotone manifolds in a monotone way, imposes great constraints. 
For example, in view of Lemma~\ref{lemma:blowups} we can perform a monotone blow up at the vertex $v$ (here $k=n$) of a monotone polytope if and only if every edge of the polytope meeting $v$ has length at least equal to $n$.
Similarly, for two disjoint blow-ups to be possible at faces $F_1$ and $F_2$ that are connected by an edge and of codimensions $k_1$ and $k_2$ one needs the length of the edge to be at least 
$$k_1+k_2 \ge n+2.$$ These two fundamental facts are exploited in this section to prove, respectively, Theorems~\ref{thm:simplex-intro} and~\ref{thm:vertex-intro}.

\subsection{Blow-ups in the monotone simplex.}

Since the monotone simplex has a complete graph, every two disjoint faces $F_1$ and $F_2$ are joined by an edge.
By Lemma~\ref{lemma:blowups}, if the faces' codimensions are $k_1$ and $k_2$ respectively, the blow-ups will cut that edge at distances $k_1-1$ and $k_2-1$. Thus, for the blow-ups to be disjoint one needs $k_1 + k_2-1$ to be strictly smaller than the length of the edges in the simplex, which is $n+1$. Hence:

\begin{proposition}
\label{prop:simplex}
Let $F_1,\dots,F_m$ are disjoint faces of the monotone simplex $(n+1)\Delta_n$, of codimensions $k_1,\dots,k_m$. Then, the following properties are equivalent:
\begin{enumerate}
\item[\rm (1)] The monotone blow-ups at $F_1,\dots, F_m$ are disjoint. 
\item[\rm (2)]  The monotone blow-ups at $F_1,\dots, F_m$ are pairwise disjoint. 
\item[\rm (3)]  $k_i+k_j \leq n+2$ for every $i,j\in\{1,\dots,m\}$.
\end{enumerate}
\end{proposition}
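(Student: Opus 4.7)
My plan is to establish the cycle $(1)\Leftrightarrow(2)\Rightarrow(3)\Rightarrow(1)$; the equivalence $(1)\Leftrightarrow(2)$ is immediate since the region cut off by the blow-up at $F_i$ is a single half-space $\{u_{F_i}\cdot x\geq 1\}\cap P$, so joint and pairwise disjointness of these regions coincide. The substantive work lies in $(2)\Leftrightarrow(3)$, which splits into complementary local and global arguments.

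For $(2)\Rightarrow(3)$ I would argue pairwise. Given disjoint $F_i,F_j$ of codimensions $k_i,k_j$, pick vertices $v_i\in F_i$ and $v_j\in F_j$; since the $1$-skeleton of the simplex is the complete graph, $v_iv_j$ is an edge, and every edge of $(n+1)\Delta_n$ has lattice length $n+1$. By Lemma~\ref{lemma:blowups} the monotone blow-up at $F_i$ has size $k_i-1$ and hence cuts this edge at lattice distance $k_i-1$ from $v_i$; symmetrically the $F_j$-blow-up cuts it at distance $k_j-1$ from $v_j$. In order for the two new facets to be disjoint along this edge, the two cut points must lie strictly in the correct order, forcing $(k_i-1)+(k_j-1)<n+1$, i.e.\ $k_i+k_j\leq n+2$.

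For $(3)\Rightarrow(1)$ I would exploit a linear identity specific to the monotone simplex. Labelling its $n+1$ facets $F_0,\dots,F_n$, a direct coordinate computation (in the model $\{x\in\R^n:x_\ell\geq-1,\ \sum_\ell x_\ell\leq 1\}$) gives $\sum_{\ell=0}^n u_{F_\ell}=\zero$. For disjoint $F_i,F_j$ corresponding to facet sets $S_i,S_j$, disjointness in a simplex is equivalent to $S_i\cup S_j=\{0,\dots,n\}$, so the vanishing sum yields
\[
u_{F_i}+u_{F_j}\,=\,\sum_{\ell\in S_i\cap S_j}u_{F_\ell}\,=\,u_G,
\]
where $G$ is the face of codimension $|S_i\cap S_j|=k_i+k_j-n-1$ cut out by the common facets. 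The lemma on monotone polytopes then gives $u_G\cdot x\leq k_i+k_j-n-1$ on $P$, which under (3) is at most $1$. Hence no $x\in P$ can satisfy both $u_{F_i}\cdot x\geq 1$ and $u_{F_j}\cdot x\geq 1$; equivalently, the closed cut-off regions of any two blow-ups are disjoint in $P$, which is~(1).

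The main subtlety is conceptual rather than technical: one must fix the precise meaning of \emph{disjoint} blow-ups. The matching arithmetic---$(k_i-1)+(k_j-1)<n+1$ in the local argument and $k_i+k_j-n-1\leq 1$ in the global one---is consistent only under the convention that the closed cut-off regions (equivalently, the new facets) must be disjoint, which is precisely what delivers the sharp bound $k_i+k_j\leq n+2$ rather than $\leq n+3$.
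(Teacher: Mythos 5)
Your proof is correct, and its sufficiency direction takes a genuinely different route from the paper. The paper's entire argument is the local edge computation: two disjoint faces of the simplex are joined by an edge of lattice length $n+1$, the monotone blow-ups truncate it at distances $k_i-1$ and $k_j-1$ from its two endpoints, and disjointness forces $(k_i-1)+(k_j-1)<n+1$, i.e.\ $k_i+k_j\leq n+2$. That gives $(2)\Rightarrow(3)$ exactly as you do, but the converse --- that this pairwise numerical condition makes the full excised regions, not merely their traces on one connecting edge, disjoint --- is left implicit in the paper. Your $(3)\Rightarrow(1)$ step supplies precisely this with a clean global argument: the $n+1$ facet normals of $(n+1)\Delta_n$ sum to $\zero$, and for disjoint faces the corresponding facet sets cover everything, so $u_{F_i}+u_{F_j}=\sum_{\ell\in S_i\cap S_j}u_{F_\ell}$, whose value on $P$ is at most $|S_i\cap S_j|=k_i+k_j-n-1\leq 1<2$; hence no point of $P$ lies beyond both cutting hyperplanes $\{u_{F_i}\cdot x=1\}$ and $\{u_{F_j}\cdot x=1\}$. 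This is more complete than the paper's proof and also correctly identifies the closed-region convention that produces the sharp bound $n+2$. The one weak spot is your justification of $(1)\Leftrightarrow(2)$: the fact that each excised region is a single half-space slice of $P$ does \emph{not} make empty common intersection equivalent to pairwise disjointness --- three such slices can pairwise overlap while having empty triple intersection (e.g.\ three vertex blow-ups of large size in a simplex). The equivalence holds only because ``disjoint'' for a family of sets is taken here, as is standard, to mean pairwise disjoint, so $(1)$ and $(2)$ coincide by convention rather than by any geometric feature of half-spaces; you should state that rather than the half-space reason.
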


This directly implies Theorem~\ref{thm:simplex-intro}:

\begin{proof}[Proof of Theorem~\ref{thm:simplex-intro}]
Disjoint faces $F_1,F_2$ of dimensions $d_1,d_2$ exist if and only if 
$$d_1 +d_2 + 2 \leq n+1,$$ since the face $F_i$ will contain $d_i+1$ vertices. Thus, since $k_i=n-d_i$, for two disjoint monotone blow-ups of codimensions $k_1$ and $k_2$ to be possible in the monotone simplex we need $k_1 +k_2  \geq n+1$, in addition to the inequality of  Proposition~\ref{prop:simplex}.
\end{proof}

\begin{corollary}
The only monotone $n$-simplex that admits \emph{three} disjoint blow-ups is the triangle, in which the three vertices can be blown-up simultaneously. 
\end{corollary}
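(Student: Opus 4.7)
The plan is to combine two disjointness constraints: the combinatorial one (three pairwise disjoint faces of a simplex must have enough vertices to fit) and the metric one from Proposition~\ref{prop:simplex} ($k_i+k_j\leq n+2$ for every pair). First I would set up notation: suppose $F_1,F_2,F_3$ are pairwise disjoint faces of $(n+1)\Delta_n$ admitting pairwise disjoint monotone blow-ups, with codimensions $k_1,k_2,k_3\geq 2$ (codimension one is excluded, as noted after Lemma~\ref{lemma:blowups}) and dimensions $d_i=n-k_i$.

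Since $F_i$ is a face of the $n$-simplex it contains $d_i+1$ vertices, and pairwise disjoint faces use disjoint vertex sets. The simplex has $n+1$ vertices, so
\[
\sum_{i=1}^3 (d_i+1) \leq n+1, \qquad\text{i.e.,}\qquad \sum_{i=1}^3 k_i \geq 2(n+1).
\]
On the other hand, summing the three instances of condition (3) in Proposition~\ref{prop:simplex} gives
\[
2\sum_{i=1}^3 k_i = \sum_{i<j}(k_i+k_j) \leq 3(n+2),
\]
so $\sum k_i \leq \frac{3n+6}{2}$. Combining both inequalities yields $2(n+1)\leq \frac{3n+6}{2}$, i.e., $n\leq 2$.

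It remains to examine the cases $n=1,2$. For $n=1$ the simplex has only two vertices, so three pairwise disjoint faces do not exist. For $n=2$, both inequalities collapse into equalities: $k_1+k_2+k_3=6$ with each pairwise sum at most $4$ and each $k_i\geq 2$, forcing $k_1=k_2=k_3=2$. These are the three vertices of the triangle $3\Delta_2$, and indeed they can be simultaneously blown up: every edge has length $3$, each vertex blow-up consumes length $k-1=1$ from each incident edge, leaving a middle segment of length $1$ uncut, so the three blow-ups are disjoint. This verifies the claim.

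The argument is essentially a double-counting identity, so I do not anticipate any real obstacle; the only thing to be careful about is that the lower bound $k_i\geq 2$ and the exclusion of $n=1$ be handled explicitly, and that the equality case be unwound to recognize vertex blow-ups of the triangle.
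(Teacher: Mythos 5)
Your argument is correct, and it uses exactly the two ingredients the paper uses: the pairwise codimension bound $k_i+k_j\leq n+2$ from Proposition~\ref{prop:simplex} and the vertex count $\sum_i(d_i+1)\leq n+1$ for pairwise disjoint faces of a simplex. Where you differ is in how these are combined. The paper argues sequentially: from $k_1+k_2\leq n+2$ it deduces that $F_1$ and $F_2$ already occupy all but at most one vertex of the simplex, so $F_3$ must be that single vertex and $k_3=n$; the remaining pairwise bounds then force $k_1,k_2\leq 2$, and the disjointness lower bound on $k_1+k_2$ yields $n\leq 2$. You instead sum all three pairwise upper bounds to get $\sum_i k_i\leq\frac{3(n+2)}{2}$ and play this off against the global lower bound $\sum_i k_i\geq 2(n+1)$ coming from the vertex count, which gives $n\leq 2$ in one line. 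The symmetric double count is arguably cleaner: it avoids the case analysis identifying $F_3$ as a vertex, and it makes transparent where the bound $n\leq 2$ comes from. Your explicit treatment of $n=1$, of the equality case $k_1=k_2=k_3=2$ for $n=2$, and of the realizability of the three simultaneous vertex blow-ups in the triangle (each cutting length $k-1=1$ from each edge of length $3$) fills in details the paper leaves implicit. There is no gap.
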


\begin{proof}
Suppose that three blow-ups are possible. Then we have $k_1+k_2 \leq n+2$ or, equivalently, $d_1 + d_2 \geq n+2$, which implies that the first two faces $F_1$ and $F_2$ already use all except perhaps one of the $n+1$ vertices in the $n$-simplex. The only possibility then is that $F_3$ equals that vertex, so that $k_3=n$. But then the inequalities  $k_1+k_3\leq 2$ and $k_2+k_3\leq 2$ imply $k_1,k_2\leq 2$, and $k_1+k_2 \leq n+2$ implies $n\leq 2$.
\end{proof}

\subsection{Blow-ups at vertices.}

We first need the following result about lengths of edges in monotone polytopes.

\begin{lemma}
\label{lemma:lengths}
Let $v$ be a vertex in a monotone $n$-polytope $P$. If the blow-up at $v$ is possible (that is, if all edges incident to $v$ have length at least $n$) then all edges incident to $v$ have length $n$ or $n+1$.
\end{lemma}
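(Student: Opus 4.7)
The plan is to normalize coordinates, reduce the desired inequality $\ell_i\leq n+1$ to an arithmetic one, and then rule out a single extremal exception by exploiting smoothness.

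First, by $\AGL(n,\Z)$-equivalence I place $v = -\mathbf{1}$ with the primitive edge directions at $v$ being $e_1,\dots,e_n$, so that $w_i = -\mathbf{1} + \ell_i e_i$. The hypothesis becomes $\ell_i \geq n$ for every $i$, and the target is $\ell_i \leq n+1$.

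Next, I examine the local structure at each $w_i$. Of its $n$ supporting facets, $n-1$ are $\{x_j = -1\}$ (for $j\neq i$), leaving a unique facet $F'_i$ not through $v$ with primitive outer normal $u^{(i)}$. The $n-1$ edges at $w_i$ other than the one back to $v$ lie in $F'_i$; unimodularity of the edge basis at $w_i$ forces them to have the form $d_k^{(i)} = e_k + \beta_k^{(i)} e_i$ for integers $\beta_k^{(i)}$. The orthogonality $u^{(i)}\cdot d_k^{(i)}=0$ together with the primitivity and outer orientation of $u^{(i)}$ pins it down as $u^{(i)} = e_i - \sum_{k\neq i}\beta_k^{(i)} e_k$, and the reflexivity identity $u^{(i)}\cdot w_i = 1$ then gives
\[
\sum_{k\neq i}\beta_k^{(i)} = 2-\ell_i.
\]

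The key estimate comes from the halfspace inequality $u^{(i)}\cdot w_k \leq 1$ for $k\neq i$, which after a short computation is equivalent to $\beta_k^{(i)} \geq -\ell_i/\ell_k$; integrality combined with $\ell_k\geq n$ upgrades this to $\beta_k^{(i)} \geq -\lfloor\ell_i/n\rfloor$. Summing over $k\neq i$ and using the identity above yields $(n-1)\lfloor\ell_i/n\rfloor \geq \ell_i - 2$. Writing $\ell_i = qn+r$ with $0\leq r<n$ and $q\geq 1$ reduces this to $q+r\leq 2$, leaving only the possibilities $\ell_i\in\{n,\,n+1,\,2n\}$.

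The hard step is excluding $\ell_i = 2n$. In that case, equality throughout the previous estimates forces $\beta_k^{(i)} = -2$ and $\ell_k = n$ for every $k\neq i$, so $u^{(i)} = e_i + 2\sum_{k\neq i}e_k$ and in particular $u^{(i)}_k = 2$ for $k\neq i$. A direct check gives $u^{(i)}\cdot w_k = 1$, so the facet $F'_i$ contains every $w_k$. But the same local analysis applied at $w_k$ forces the primitive outer normal of the unique facet through $w_k$ not containing $v$ to have $k$-th coordinate equal to $1$; since $u^{(i)}_k = 2$, these two facets must be distinct. Consequently, $w_k$ lies on at least $n+1$ facets of $P$, contradicting simplicity of $P$ at $w_k$ and ruling out the case $\ell_i = 2n$. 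This leaves $\ell_i\in\{n,n+1\}$, as required.
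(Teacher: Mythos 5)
Your proof is correct, and its global skeleton coincides with the paper's: normalize so that $v=-\one$ with edges along the coordinate directions, determine the primitive normal $u^{(i)}$ of the unique facet through the neighbor $w_i$ missing $v$ (with $i$-th coordinate $1$ by smoothness), use reflexivity to get $\sum_{k\ne i}\beta_k^{(i)}=2-\ell_i$, bound each $\beta_k^{(i)}$ below by $-\ell_i/\ell_k$, and then kill an extremal configuration. Two of your local arguments differ from the paper's in ways worth recording. First, you obtain the key inequality $\beta_k^{(i)}\geq -\ell_i/\ell_k$ simply by evaluating the facet inequality $u^{(i)}\cdot x\leq 1$ at the neighbor $w_k$; the paper instead argues geometrically that the $2$-face through $v,w_i,w_k$ is contained in a triangle, which yields the same bound but requires a picture. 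Your derivation is more direct and self-contained. Second, you invoke integrality of the $\beta_k^{(i)}$ immediately to reduce to $\ell_i\in\{n,n+1,2n\}$, and you exclude $\ell_i=2n$ by showing that the forced equalities make $F'_i$ pass through every $w_k$ while having $k$-th normal coordinate $2$, so that $w_k$ would lie on $n+1$ facets, contradicting simplicity; the paper instead excludes the same extremal case by noting that the forced $2$-face would be a triangle with edge directions ${\rm e}_k$ and $-2{\rm e}_i+{\rm e}_k$ at one vertex, which is not unimodular. Both endgames use smoothness, yours through the facet count at a vertex and the paper's through non-smoothness of a $2$-face; each is a clean one-step contradiction, and your version has the minor advantage of never leaving the ambient coordinates.
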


\begin{proof}
Without loss of generality assume that $v=-\one$, and that the edges incident to it go in the positive coordinate directions. Let $v_1,\dots,v_n$ be the neighbors of $v$ and let $\ell_i$, with $i=1,\dots,n$, denote the length of the edge $vv_i$. We have that
\[
v_i = -\one + \ell_i {\rm e}_i.
\]

For each vertex $v_i$ let $F_i$ be the unique facet containing $v_i$ but not $v$, and let
$u_i$ be the primitive normal vector to $F_i$.
We concentrate on $v_1$ and $u_1$ for the rest of the proof, but the arguments obviously apply to every $i$. 

Let us denote $u_i=(\lambda_1,\dots,\lambda_n)$ the coordinates of $u_1$.
Since the other facet-normals of facets containing  $v_1$ are $-{\rm e}_2,\dots,-{\rm e}_n$,  smoothness implies that $\lambda_1=1$. Reflexiveness, in turn,  implies that
\[
1 = u_1\cdot v_1 = u_1 \cdot (-\one + \ell_1 {\rm e}_1) = -1 -\sum_{i=2}^n \lambda_i + \ell_1,
\]
so that $$\sum_{i=2}^n\lambda_i = \ell_1-2.$$

For each $i\in \{2,\dots,n\}$ with $\lambda_i >0$ consider the edge-vector starting from $u_1$ and in the $2$-face spanned by $v$, $v_1$ and $v_i$. This vector is orthogonal to $u_1$, 
so it is parallel to $-\lambda_i {\rm e}_1 + {\rm e}_i$. 
In particular, the $2$-face in question is contained in the triangle with vertices $v$, $v_1$ and $-\one + \ell_1/\lambda_i$, which implies $\ell_i \leq \ell_1/\lambda_i$. Hence 
\begin{align*}
\label{eq:inverse}
{\lambda_i} \leq  \frac{\ell_1}{\ell_i}.
\end{align*}
This equation is trivially satisfied also if $\lambda_i \leq 0$, so we get that
\[
 {\ell_1-2} = \sum_{i=2}^n \lambda_i \leq \sum_{i=2}^n\frac{\ell_1}{\ell_i} \leq \ell_1\frac{n-1}{n},
\]
where the last inequality comes from the fact that $\ell_i\geq n$ for every $i$.

That is:
\[
\frac{\ell_1-2}{\ell_1} \leq \frac{n-1}n,
\]
which implies $\ell_1\leq 2n$. 
We claim that $\lambda_i \leq 1$ for every $i$, which will finish the proof since then 
\[
\ell_1 = \sum_{i=2}^n\lambda_i +2 \leq (n-1)+2=n+1.
\]

To prove the claim, suppose in order to seek a contradiction that $\lambda_i\geq 2$ for some $i$. Together with $\ell_1\leq 2n$ this gives
\[
n \leq \ell_i \leq \frac{\ell_1}{\lambda_i} \leq \frac{2n}{2} =n,
\]
which implies that the two inequalities are equalities; $\lambda_i=2$, $\ell_i=n$, and $\ell_1=2n$. But this says that  the $2$-face containing $v$, $v_1$ and $v_i$ must 
 \emph{equal} the triangle discussed above, with two edges in the coordinate directions $1$ and $i$ and third edge parallel to $-2 {\rm e}_1 + {\rm e}_i$. 
This is impossible since that triangle is not smooth: the vectors ${\rm e}_i$ and $-2{\rm e}_1 + {\rm e}_i$ cannot be part of a unimodular basis.

This finishes the proof, but let us make the following additional remark to prepare for the proof of our next result:
The inequality $\lambda_i \leq 1$ for all $i$ together with the fact that $$\sum_{i=2}^n \lambda_i =\ell_1 -2 \in \{n-2,n-1\}$$ gives the following two possibilities for $u_1$: either all $\lambda_i$'s equal 1, or all $\lambda_i$'s equal 1 except for one of them which equals zero.
That is:
\begin{quote}
\it
If all edges from a vertex $v$ of a monotone $n$-polytope have length at least $n$, then the  normals to the facets $F_1,\dots, F_n$ containing a neighbor of $v$ but not $v$ are
 (in the coordinate system where edges from $v$ go in the positive coordinate directions) all of the form $\one$ or $\one-{\rm e}_i$.
\qedhere
\end{quote}
\end{proof}

We are now ready to prove Theorem~\ref{thm:vertex-intro}:

\begin{proof}[Proof of Theorem~\ref{thm:vertex-intro}]
Let us keep the notation of the previous proof, so that the vertex that admits a blow-up is $v:=-\one$, the neighbors of $v$ are
\[
v_i := -\one + \ell_i {\rm e}_i
\]
with $\ell_i\in \{n,n+1\}$ for all $i$, and $F_i$ and $u_i$ denote the facet containing $v_i$ but not $v$ and its normal vector, respectively.

As said at the end of the previous proof, for each $i$ we have that either $u_i=\one$ or $u_i =\one -{\rm e}_j$ for some $j\ne i$. Moreover these two possibilities correspond exactly to $\ell_i=n+1$ and $\ell_i=n$, respectively. (This last statement follows from the previous proof, but it also follows immediately from $u_i \cdot v_i =1$).

If all lengths $\ell_i$ are equal to $n+1$, that is, if all facets $F_i$ have the same normal vector $\one$, then all the $v_i$ lie in one and the same facet, contained in the hyperplane $\one \cdot x = 1$. $P$ equals the monotone simplex.

So, for the rest of the proof we assume that (at least) one of the lengths equals $n$, so there is a facet $F$ defined by $$(\one-{\rm e}_j)\cdot x \le1$$ for some $j$. This implies that \emph{every} $\ell_i$ with $i\ne j$ equals $n$ (because  $(\one-{\rm e}_j)\cdot(-\one + \ell_i {\rm e}_i) \leq 1$ gives $\ell_i \leq n$) and the corresponding $v_i$'s lie in $F$. That is, $F$ contains all but one of the neighbors of $v$.
Now,  at the remaining neighbor $v_j$ the normal to its facet $F_j$ can not be of the form $\one-{\rm e}_k$, because such a facet should again contain all the neighbors of $v$ except $v_k$, implying that every neighbor of $v$ except for $v_j$ and $v_k$ lie in (at least) $n+1$ facets: the $n-1$ that it has in common with $v$ plus the two facets $F$ and $F_j$. (Observe that here is where we use $n\geq 3$, since we are using that $v$ has at least one neighbor other than $v_j$ and $v_k$.)

Thus, $P$ is contained in the polytope $Q$ defined by the following $n+2$ inequalities: $-x_i\geq 1$ for all $i$, $\sum_i x_i \leq 1$ and  $\sum_{i\ne j} x_i \leq 1$ for some $j$. 
This polytope $Q$ is the blow-up of the monotone simplex at the codimension-two facet opposite to $v$ and $v_j$, and our claim is that $P$ actually equals $Q$.
To prove the claim observe that $Q$ has the following $2n$ vertices (it is combinatorially a prism over an $(n-1)$-simplex): $v$, $v_j=v+(n+1) {\rm e}_j$, and, for each $i\ne j$, 
the two vertices $v_i=v+ n {\rm e}_i$ and $v_i +{\rm e}_j =v+n{\rm e}_i +{\rm e}_j$. Our only remaining task is to show that the vertices of this last form are also in $P$; this will imply $P=Q$ since $P$ will be a polytope contained in $Q$ and containing all the vertices of $Q$.

Recall that the facets at $v_i$ have the following $n$ normal vectors: the vector ${\rm e}_k$ for every $k\ne i$ and the vector $\one - {\rm e}_j$ normal to $F$. Forgetting the facet with normal ${\rm e}_j$ we get that the intersection of the remaining $n-1$ is an edge parallel to the $j$-coordinate. Thus: $P$ has an edge going from $v_i$ in the positive $j$-coordinate direction. This edge necessarily contains the next lattice point, which is precisely the vertex $v_i +{\rm e}_j$ of $Q$ we were searching for.
\end{proof}

\section{Concluding remarks and a question}

In this paper have have exploited the deep connection between symplectic geometry, algebraic geometry and combinatorics, via the dictionaries which allow the translation of certain problems
from one area to another, as it has been discovered in recent years. 

We have been concerned with smooth reflexive polytopes, which are geometric combinatorial objects that have
analogues both in algebraic and symplectic geometry, and which appear also in the context of mirror symmetry. The theorems we have shown essentially classify the
structure of monotone blow-ups of such polytopes, and,  via de dictionary between polytopes and manifolds/varieties we have discussed,  the structure of monotone blow-ups
of monotone symplectic toric manifolds and of equivariant blow-ups of toric Fano varieties.

In recent years many groups have used these connections to advance their work, as this
fruitful interaction has allowed to translate certain problems into a language and setting in which they can be more tractable (or for which there are better developed techniques). One important example of
these interactions appears emphasized in McDuff's seminal paper~\cite{McDuff} on the topology of symplectic toric manifolds,  which was the inspiration for the present paper, and 
where she uses both symplectic geometric arguments and polytope/combinatorial arguments 
to prove a number of results about monotone symplectic toric manifolds.

We pose the following question, motivated by the results of the present paper:

\begin{question}
Does there exist a monotone $n$-polytope with two monotone blow-ups of codimensions adding to more than $n+2$?
\end{question}

We believe the answer to be negative, which would give a common generalization of Theorems~\ref{thm:vertex-intro} and~\ref{thm:simplex-intro}.

 In fact, a positive answer to the question might imply that a complete classification of monotone polytopes that admit two disjoint monotone blow-ups is doable. The list should include simplices and also products, wedges, and bundles over them, with certain restrictions on the dimensions of the factors.

\bibliographystyle{amsplain}

\end{document}